\documentclass{amsart}

\usepackage{graphics}
\usepackage{graphicx}
\usepackage{epsfig}
\usepackage{amscd}
\usepackage{amsrefs}
\usepackage{amssymb}
\usepackage{amsthm}
\usepackage{color}
\usepackage{extarrows}
\usepackage{amsmath}
\numberwithin{equation}{section}
\usepackage{mathrsfs}
\usepackage{hyperref}
\usepackage{bm}
\usepackage{float}
\usepackage{geometry}
\usepackage{tikz}
\usepackage{tikz-cd}
\usepackage{ytableau}
\usepackage{multirow}
\usepackage{verbatim}
\usepackage{stmaryrd}
\usepackage{geometry}
\geometry{left=2.5cm,right=2.5cm,top=2.5cm,bottom=2.5cm}
\usepackage[all,pdf]{xy}
\usepackage{lmodern,amssymb}

\usetikzlibrary{matrix}
\usetikzlibrary{arrows}
\usetikzlibrary{decorations.pathreplacing,decorations.markings}
\usetikzlibrary{matrix,arrows}
\usetikzlibrary{arrows,shapes,positioning}
\usetikzlibrary{decorations.markings}
\tikzstyle arrowstyle=[scale=1]
\tikzstyle directed=[postaction={decorate,decoration={markings,
    mark=at position .5 with {\arrow[arrowstyle]{stealth}}}}]
\tikzstyle reverse directed=[postaction={decorate,decoration={markings,
    mark=at position .65 with {\arrowreversed[arrowstyle]{stealth};}}}]

\newtheorem{thm}{Theorem}[section]

\newtheorem{cor}[thm]{Corollary}
\newtheorem{prop}[thm]{Proposition}
\newtheorem{conj}[thm]{Conjecture}

\theoremstyle{plain}
\newtheorem{lem}[thm]{Lemma}

\theoremstyle{remark}
\newtheorem{rem}[thm]{Remark}

\theoremstyle{definition}
\newtheorem{defn}[thm]{Definition}
\newtheorem{ex}[thm]{Example}

\numberwithin{equation}{section}



\newcommand{\A}{\mathscr{A}}

\newcommand{\C}{\mathbb{C}}
\newcommand{\F}{\mathbb{F}}
\newcommand{\codim}{\mathrm{codim}}

\newcommand{\Ho}{\mathfrak{h}}
\newcommand{\G}{\mathfrak{g}}
\newcommand{\Qo}{\mathfrak{q}}
\newcommand{\Lie}{\mathbb{L}}

\newcommand{\Q}{\mathbb{Q}}
\newcommand{\Z}{\mathbb{Z}}
\newcommand{\M}{\mathcal{M}}
\newcommand{\LL}{\mathcal{L}}
\newcommand{\T}{\mathcal{T}}
\newcommand{\Sal}{\mathcal{S}}
\newcommand{\bt}[1][normal]{\begin{tikzcd}[ampersand replacement = \&, column sep=#1,row sep=#1]}
\newcommand{\et}{\end{tikzcd}}


\begin{document}

\title{Holonomy Lie algebra of a geometric lattice}

\author{Weili Guo}
\address{Department of Mathematics, Beijing University of Chemical Technology, 15 Beisanhuan East Road, Chaoyang District, Beijing 100013, China}
\email{guowl@mail.buct.edu.cn}
\author{Ye Liu}
\address{Department of Pure Mathematics, Xi'an Jiaotong-Liverpool University, 111 Ren'ai Road, Suzhou, Jiangsu 215123 China}
\email{yeliumath@gmail.com}


\subjclass[2020]{Primary 06C10, 16S30; Secondary 52C35, 52C40}

\date{}


\keywords{Holonomy Lie algebra, geometric lattice, matroid, hypersolvable arrangement}

\begin{abstract}
Motivated by Kohno's result on the holonomy Lie algebra of a hyperplane arrangement, we define the holonomy Lie algebra of a finite geometric lattice in a combinatorial way. For a solvable pair of lattices, we show that the holonomy Lie algebra is an almost-direct product of the holonomy Lie algebra of the sublattice and a free Lie subalgebra. This yields the structure of the holonomy Lie algebra of a finite hypersolvable (including supersolvable) lattice. As applications, we obtain the structure of the holonomy Lie algebra of (the Salvetti complex of) a supersolvable oriented matroid, and that of a hypersolvable arrangement.
\end{abstract}

\maketitle

\tableofcontents

\section{Introduction}

For a hyperplane arrangement $\A$ in $\C^{\ell}$, the complement
\[
M=M(\A)=\C^{\ell}\setminus\bigcup_{H\in\A}H
\]
is of significant interest in the study of hyperplane arrangements. The fundamental group $G=\pi_1(M)$ is arguably the most important group associated to an arrangement. The lower central series of $G$ is 
\[
G=G_1\supset G_2\supset G_3\supset\cdots
\]
where $G_i=[G_{i-1},G_1]$ for $i\geq 2$. The graded vector space $\mathrm{gr}(G;\Q):=\left(\bigoplus G_i/G_{i+1}\right)\otimes\Q$ is in fact a Lie algebra, with bracket induced by group commutators, hence called the \emph{associated graded Lie algebra} of $G$. The dimension of each graded piece $\varphi_i(\A)=\varphi_i(G)=\dim \mathrm{gr}(G;\Q)_i=\mathrm{rank}(G_i/G_{i+1})$ is then an invariant of the arrangement $\A$. Evidently, $\varphi_1(\A)=\#\A$. In the literature, 
\[
U_{\A}(t):=\prod_{i=1}^{\infty}(1-t^i)^{\varphi_i(\A)}\in \Z[[t]]
\]
is used frequently to encapsulate all information about $\varphi_i(\A)$. An explicit equality relating $U_{\A}(t)$ with some polynomial is usually called a \emph{LCS (lower central series) formula}. The first such formula is  due to Kohno \cite{Kohno1985}, for braid arrangement: $$U_{\A}(t)=P_M(-t),$$ where $P_M(t)$ is the Poincar\'e polynomial of $M$. The same formula has been proved for \emph{fiber-type arrangements} by Falk and Randell \cite{Falk-Randell1985}, Shelton and Yuzvinsky \cite{Shelton-Yuzvinsky1997} also give an interpretation using Koszul duality. The LCS formula for $M$ a \emph{formal rational $K(\pi,1)$ space} has been given by Papadima and Yuzvinsky \cite{Papadima-Yuzvinsky1999}. An analogue formula for \emph{hypersolvable arrangements} has been proved by Jambu and Papadima \cite{Jambu-Papadima1998}. For \emph{decomposable arrangements}, Papadima and Suciu \cite{Papadima-Suciu2006} give a LCS formula. Lima-Filho and Schenck \cite{LimaFilho-Schenck2009} proves a LCS formula for graphic arrangements (subarrangements of braid arrangements), which was conjectured in \cite{Schenck-Suciu2002}.

The associated Lie algebra $\mathrm{gr}(G;\Q)$ is constructed in a group-theoretic fashion and thus not easy to deal with. Thanks to a theorem of Kohno \cite{Kohno1983}, (see Theorem \ref{Kohno}), it is naturally isomorphic to a combinatorially defined graded Lie algebra, the \emph{holonomy Lie algebra} $\Ho(M)$ (definition given in \S\ref{holo}), which is defined using the intersection lattice $L(\A)$ up to rank $2$. 

The study of this paper originates from generalizing this combinatorial definition of holonomy Lie algebra to general finite geometric lattices. This action has both advantages and disadvantages. The bright side is that once we enter the combinatorial framework, many existing lattice theoretic properties and arguments can show their full power in this larger scope, as well as new interesting combinatorial problems sprout. While the downside is also clear, that we lose geometry and topology due to lack of desirable geometric/topological models of geometric lattices (or matroids).

The structure of the paper is as follows. We recall basic definitions and properties of geometric lattice, solvable lattice pair and hypersolvable/supersolvable lattices in Section \ref{defn}. Orlik-Solomon algebra and holonomy Lie algebra of a geometric lattice are defined and studied in Section \ref{osvshlnm}. The main section of this paper is Section \ref{hlnmlatpair}, in which we investigate the structure of holonomy Lie algebras for solvable lattice pairs. Finally, we end with applications to supersolvable oriented matroids and hypersolvable arrangements in Section \ref{appl}.

\section{Definitions}\label{defn}
\subsection{Geometric lattice}
All lattices in what follows are assumed to be {\it finite}, unless otherwise stated.

A lattice $L$ is a poset in which the operations {\it join} and {\it meet} are defined for all pairs $x,y\in L$. Recall that the join $x\vee y$ and the meet $x\wedge y$ represent the least upper bound and the greatest lower bound for $x,y\in L$ respectively. Note that in a lattice $L$, join and meet can be defined for more elements, e.g. $\vee\{x,y,z\}=x\vee y\vee z$. A lattice $L$ has a unique minimal element $\hat{0}$, called the {\it bottom}, and a unique maximal element $\hat{1}$, called the {\it top}. For $x>y$ in $L$, we say $x$ {\it covers} $y$, written as $x\gtrdot y$ or $y\lessdot x$, if $x\geq z\geq y$ implies $z=x$ or $z=y$.

For a lattice $L$ and an element $x\in L$, we define a sublattice
\[
L_x:=\{y\in L\mid y\leq x\}.
\]

We denote by $A=A(L)$ the set of {\it atoms} of $L$, that are elements covering $\hat{0}$, and $A^{op}=A(L^{op})$ the set of {\it coatoms}, that are elements covered by $\hat{1}$, or equivalently, atoms of the opposite lattice $L^{op}$. We call a lattice $L$ {\it atomic} if every element is a join of atoms, where we agree that an atom is the join of itself and $\hat{0}$ is the join of $0$ atom.

A lattice $L$ is said to be {\it ranked} (or {\it graded}), if it admits a nonnegative integer valued function $r:L\to\Z_{\geq 0}$, such that $r(\hat{0})=0$ and $r(x)=r(y)+1$ whenever $x\gtrdot y$. We call such $r$ the {\it rank function} of $L$ and the {\it rank} of $L$ is defined by $r(L):=r(\hat{1})$. Dually, we call the rank function $r^{op}$ of $L^{op}$ as the {\it corank function} of $L$. Note that $r(x)+r^{op}(x)=r(L)$ for all $x\in L$. In a ranked lattice $L$, we denote
\[
L_k:=\{x\in L\mid r(x)=k\},
\]
and the truncation
\[
L_{\leq k}:=\{x\in L\mid r(x)\leq k\}.
\]
The atoms of $L$ are just rank $1$ elements $A=L_1$ and the coatoms of $L$ are just corank $1$ elements $A^{op}=L_{r(L)-1}$. A ranked lattice $L$ is {\it semimodular} if the following {\it semimodular inequality} holds for all $x,y\in L$,
\[
r(x)+r(y)\geq r(x\wedge y)+r(x\vee y).
\]

\begin{defn}
    A {\it geometric lattice} $L$ is a ranked lattice, which is both atomic and semimodular. An element of a geometric lattice is called a \emph{flat}.
\end{defn}

\begin{ex}
    A central hyperplane arrangement $\A$ is a finite set of (linear) hyperplanes in $V=\C^{\ell}$. The {\it intersection lattice} $L(\A)$ is the set of all nonempty intersections in $\A$, including the whole space $V=\C^{\ell}$ as the intersection over the empty set. The order of $L(\A)$ is defined by reverse inclusion, $x\leq y$ in $L(\A)$ if $x\supseteq y$. Then the top $\hat{1}=\bigcap\A=\bigcap_{H\in\A}H$ and the bottom $\hat{0}=V$. For $x,y\in L(\A)$, their meet is $x\wedge y=\bigcap\{z\in L(\A)\mid x\cup y\subseteq z\}$ and their join is $x\vee y=x\cap y$.
    
    The rank function of $L(\A)$ is given by
$$r(x)=\codim(x)=\ell-\dim(x).$$
It is well known that $L(\A)$ is a geometric lattice \cite{Orlik-Terao1992}.
\end{ex}

\begin{ex}[Example 3.1 of \cite{Falk1993}]\label{falk}
    Let $\A=\{H_1,\ldots,H_6\}$ be the hyperplane arrangement in $\C^3$ defined by $H_1=\{x+z=0\},H_2=\{x-z=0\},H_3=\{x+y=0\},H_4=\{y=0\},H_5=\{x-y=0\},H_6=\{z=0\}$. One checks that $H_{126}:=H_1\cap H_2\cap H_6=\{(0,y,0)\mid y\in\C\}$ and $H_{345}:=H_3\cap H_4\cap H_5=\{(0,0,z)\mid z\in\C\}$ have rank $2$. There are other $9$ rank $2$ flats formed by intersecting one of $H_1,H_2,H_6$ with one of $H_3,H_4,H_5$. See Figure \ref{falkex} for the Hasse diagram of $L(\A)$.
\begin{figure}
    \centering
       
\begin{tikzpicture}[scale=.7]
   \node (b) at (0,0) {$\hat{0}$};
   \node (1) at (-6,2) {$1$};
   \node (2) at (-4,2) {$2$};
   \node (6) at (-2,2) {$6$};
   \node (3) at (2,2) {$3$};
   \node (4) at (4,2) {$4$};
   \node (5) at (6,2) {$5$};
   \node (126) at (-10,5) {$126$};
   \node (13) at (-8,5) {$13$};
   \node (14) at (-6,5) {$14$};
   \node (15) at (-4,5) {$15$};
   \node (23) at (-2,5) {$23$};
   \node (24) at (0,5) {$24$};
   \node (25) at (2,5) {$25$};
   \node (36) at (4,5) {$36$};
   \node (46) at (6,5) {$46$};
   \node (56) at (8,5) {$56$};
   \node (345) at (10,5) {$345$};
   \node (t) at (0,7) {$\hat{1}$};
   \draw (b) -- (1) --(126) -- (t) -- (13) -- (1) -- (14) -- (t) -- (15) -- (1);
   \draw (b) -- (2) -- (23) -- (t) -- (24) -- (4) -- (b) -- (6) -- (126) -- (2)--(25)--(t)--(36)--(3)--(b)--(5)--(56)--(t)--(46)--(4)--(345)--(t);
   \draw (13)--(3)--(345)--(5)--(25);
   \draw (14)--(4);
   \draw (15)--(5);
   \draw (23)--(3);
   \draw (24)--(2);
   \draw (36)--(6)--(46);
   \draw (56)--(6);

\end{tikzpicture}
    \caption{Intersection lattice $L(\A)$}
    \label{falkex}
\end{figure}
 
\end{ex}

\begin{ex}
      A matroid is a generalization of independence relations among a set of vectors. There are various equivalent ways to define a matroid (see \cite{Oxley2011}). Let us define a matroid $\M=(E,r)$ as a finite ground set $E$ equipped with a map $r:2^E\to \Z_{\geq 0}$ satisfying
      \begin{enumerate}
          \item If $X\subseteq E$, then $0\leq r(X)\leq\#X$.
          \item If $X\subseteq Y\subseteq E$, then $r(X)\leq r(Y)$.
          \item If $X,Y\subseteq E$, then $r(X\cup Y)+r(X\cap Y)\leq r(X)+r(Y)$.
      \end{enumerate}
      This $r$ is called the \emph{rank function} of $\M$. The \emph{rank} of $\M$ is defined as $r(E)$. If $V$ is a vector space over a field $\F$ and $E$ is a finite set of vectors in $V$, define $r(X):=\dim_{\F}\mathrm{span}(X)$ for $X\subseteq E$. Then $(E,r)$ is matroid. A matroid isomorphic to such a $(E,r)$ is said to be $\F$-representable. Note that a complex central hyperplane arrangement $\A$ is simply a $\C$-representable matroid $(\A,r)$, where $r$ is the rank function of $L(\A)$.
      
      For a matroid $\M=(E,r)$, we define a map $cl:2^E\to 2^E$ by
      \[
      cl(X)=\{x\in E\mid r(X\cup\{x\})=r(X)\}.
      \]
      This map is called the \emph{closure operator} of $\M$ and $cl(X)$ is called the \emph{closure} of $X\subseteq E$. A closure is also called a flat. Define the set of flats
      \[
      L(\M):=\{cl(X)\mid X\subseteq E\}
      \]
      with partial order given by inclusion. It is a fact that $L(\M)$ is a lattice with join and meet defined by $X\vee Y=cl(X\cup Y)$ and $X\wedge Y=X\cap Y$ (Lemma 1.7.3 of \cite{Oxley2011}). The rank function $r$ of $\M$ also restricts to $L(\M)$. We call $L(\M)$ the \emph{flat lattice} of $\M$. Moreover, a lattice $L$ is geometric if and only if $L\cong L(\M)$ for some matroid $\M$ (Theorem 1.7.5 of \cite{Oxley2011}). In fact, there is a bijection between geometric lattices and \emph{simple} matroids (\cite{Oxley2011} p.54).
\end{ex}

\begin{ex} \label{fanoex}
    The Fano matroid $F_7$ is shown in Figure \ref{fano}.
    \begin{figure}
    \centering
        \begin{tikzpicture}

   \draw (0,0) circle (1);
   \draw (90:2) -- (-30:2)--(210:2)--cycle;

   \draw (90:2)--(0,0);
   \draw (210:2)--(0,0);
   \draw (-30:2)--(0,0);

   \draw (30:1)--(0,0);
   \draw (150:1)--(0,0);
   \draw (270:1)--(0,0);

   \fill (-1.732,-1) circle (1.5pt) node[left] {$a$};
   \fill (1.732,-1) circle (1.5pt) node[right] {$c$};
   \fill (0,-1) circle (1.5pt) node[below] {$b$};
   \fill (0,2) circle (1.5pt) node[right] {$e$};
   \fill (0,0) circle (1.5pt) node[above right] {$g$};
   \fill (0.866,0.5) circle (1.5pt) node[right] {$d$};
   \fill (-0.866,0.5) circle (1.5pt) node[left] {$f$};

\end{tikzpicture}
\caption{Fano matroid $F_7$}\label{fano}
    \end{figure}
    The nodes are atoms and three nodes on a line (or a circle) form a rank $2$ flat. The whole matroid $F_7$ has rank $3$. It is easy to depict the Hasse diagram of the flat lattice $L(F_7)$ (Figure \ref{fanolat}). 
    \begin{figure}
        \centering
        \begin{tikzpicture}[scale=.7]
   \node (0) at (0,0) {$\hat{0}$};
   \node (a) at (-6,2) {$a$};
   \node (b) at (-4,2) {$b$};
   \node (c) at (-2,2) {$c$};
   \node (d) at (0,2) {$d$};
   \node (e) at (2,2) {$e$};
   \node (f) at (4,2) {$f$};
   \node (g) at (6,2) {$g$};
   \node (abc) at (-6,5) {$abc$};
   \node (adg) at (-4,5) {$adg$};
   \node (aef) at (-2,5) {$aef$};
   \node (bdf) at (0,5) {$bdf$};
   \node (beg) at (2,5) {$beg$};
   \node (cde) at (4,5) {$cde$};
   \node (cfg) at (6,5) {$cfg$};
   \node (1) at (0,7) {$\hat{1}$};
   \draw (0) -- (a) -- (abc) -- (1) -- (adg) -- (d) -- (0) -- (b) -- (bdf) -- (1) -- (aef) -- (e) -- (0) -- (c) -- (cde) -- (1) -- (beg) -- (g) -- (0) -- (f) -- (cfg) -- (1);
   \draw (a) -- (adg) -- (g) -- (cfg) -- (c) -- (abc) -- (b) -- (beg) -- (e) -- (cde) -- (d) -- (bdf) -- (f) -- (aef) -- (a);
\end{tikzpicture}
        \caption{Flat lattice $L(F_7)$}
        \label{fanolat}
    \end{figure}
       It is known that $F_7$ is not $\C$-representable. In fact, $F_7$ is representable over a field $\F$ if and only if $\F$ has characteristic $2$ (\cite{Oxley2011} p.643).

\end{ex}

\subsection{Lattice pair}
We will study geometric lattices in pairs. That is, a geometric lattice and a sublattice generated by a subset of atoms satisfying some requirements. The following definitions and easy propositions are taken from Section 1 of \cite{Jambu-Papadima1998}, we point out that although Jambu and Papadima work in the representable case, that is $L=L(\A)$ for some central hyperplane arrangement $\A$, everything recorded here makes perfect sense for a general geometric lattice.

In what follows, let $L$ be a geometric lattice with atom set $A=L_1$.

\begin{defn}
    Let $B\subseteq A$ be a subset of atoms. We define the sublattice $L(B)$ of $L$ generated by $B$ as the lattice whose elements are joins of elements in $B$. Note that $L(B)$ inherits a geometric lattice structure and a rank function from $L=L(A)$ (while the corank function is shifted).
\end{defn}

Remark that $L(B)\subseteq L_{\vee B}$, the latter may have atoms outside of $B$.

\begin{defn}[Closed subsets]
	A proper subset $B\subsetneq A$ is said to be \emph{closed} in $A$ if $r(x\vee y\vee z)=3$ for any $x,y\in B~(x\neq y)$ and any $z\in A\setminus B$.
\end{defn}

\begin{defn}[Complete subsets]
	A proper subset $B\subsetneq A$ is \emph{complete} in $A$ if for any $x,y\in A\setminus B~(x\neq y)$, there exists $z\in B$ such that $r(x\vee y\vee z)=2$.
\end{defn}

\begin{lem}[Lemma 1.3 of \cite{Jambu-Papadima1998}]\label{cc}
	If $B$ is closed and complete in $A$, then
	\begin{enumerate}
	    \item $r(\vee A)-r(\vee B)\leq 1$.
        \item the element $z$ in the definition of completeness is uniquely determined by $x$ and $y$. Then we denote $z=f(x,y)=f(y,x)$.
	\end{enumerate}
\end{lem}

\begin{defn}[Solvable subsets]\label{solvable}
	A proper subset $B\subsetneq A$ is called a \emph{solvable subset} (or say the pair $B\subsetneq A$ is a \emph{solvable} extension) if $B$ satisfies the following three conditions
	\begin{enumerate}
	    \item $B$ is closed in $A$;
		\item $B$ is complete in $A$;
		\item for any distinct $x,y,z\in A\setminus B$, either $f(x,y)=f(y,z)=f(x,z)$ or $r(f(x,y)\vee f(y,z)\vee f(x,y))=2$.
	\end{enumerate}
\end{defn}

By Lemma \ref{cc} (1), we call a solvable extension $B\subsetneq A$ \emph{nonsingular} (resp. \emph{singular}) if $r(\vee A)-r(\vee B)=1$ (resp. $r(\vee A)=r(\vee B)$).

\begin{lem}[Lemma 1.7 of \cite{Jambu-Papadima1998}]\label{lem1.7}
    If $B\subsetneq A$ satisfies $r(\vee A)-r(\vee B)=1$, then
    \begin{enumerate}
          \item $B$ is solvable if and only if $B$ is complete in $A$.
        \item If $B$ is complete in $A$, then $B=A\cap L_{\vee B}$ (or equivalently $L(B)=L_{\vee B}$).
    \end{enumerate}
\end{lem}

\subsection{Hypersolvable vs supersolvable lattices}

\begin{defn}
We say $L$ is \emph{hypersolvable} if there is a filtration of atom subsets
\[
A_1\subsetneq A_2\subsetneq \cdots \subsetneq A_{\ell}=A
\]
such that $\#A_1=1$ and $A_i$ is solvable in $A_{i+1}$ for $i=1,2,\ldots,\ell-1$. The filtration is called a \emph{composition series} of $L$.
\end{defn}

\begin{ex}\label{falkcs}
    Let $L=L(\A)$ be the intersection lattice of the arrangement $\A$ in Example \ref{falk}. It is not difficult to verify
    \[
    \{H_1\}\subsetneq \{H_1,H_2,H_6\}\subsetneq \{H_1,H_2,H_6,H_3\}\subsetneq \A
    \]
    is a composition series of $L$. Hence $L$ is hypersolvable.
\end{ex}

The class of hypersolvable geometric lattices contains the class of supersolvable geometric lattices.

\begin{defn}
A pair $(x, y)\in L\times L$ is called a {\it modular pair} if for all $z\in L$ with $z\leq y$,
$$z\vee(x\wedge y)=(z\vee x)\wedge y.$$
An element $x\in L$ is called {\it modular} if $(x, y)$ is a modular pair for all $y\in L$.
\end{defn}
Note that a pair $(x, y)$ is modular if and only if $r(x)+r(y)=r(x\wedge y) + r(x\vee y)$ (Lemma 2.24 \cite{Orlik-Terao1992}).

\begin{lem}[Lemma 1.9 of \cite{Jambu-Papadima1998}]\label{modcork1}
    Suppose $x\in L$ has corank $1$. Then $x$ is modular if and only if $A_x:=A\cap L_x$ is solvable in $A$.
\end{lem}

\begin{defn}
A geometric lattice $L$ is called \emph{supersolvable} if $L$ has a maximal chain of modular flats
\[
\hat{0}=x_0\lessdot x_1\lessdot\cdots\lessdot x_r=\hat{1},
\]
where each $x_i$ is modular of rank $i$ and $r=r(L)$.
\end{defn}

\begin{ex}
    The flat lattice $L(F_7)$ of Fano matroid (Example \ref{fanoex}) is supersolvable since every flat is modular.
\end{ex}

\begin{prop}[Proposition 1.10 of \cite{Jambu-Papadima1998}]\label{sscs}
    If $L$ is supersolvable, then $L$ is hypersolvable with a composition series
    \[
A_1\subsetneq A_2\subsetneq \cdots \subsetneq A_{\ell}=A
\]
such that $r(\vee A_{i+1})=r(\vee A_i)+1$ for $i=1,2,\cdots,\ell-1$. In particular, $\ell=r(\vee A)=r(L)$.
\end{prop}

Nevertheless, Jambu and Papadima developed a deformation method \cites{Jambu-Papadima1998,Jambu-Papadima2002} to relate hypersolvable and supersolvable arrangements. We pose the following lattice theoretic counterpart as a conjecture.
\begin{conj}
    If $L$ is hypersolvable, then there exist $L^{\prime}$ which is supersolvable, and $L_{\leq 2}\cong L^{\prime}_{\leq 2}$.
\end{conj}
Jambu and Papadima \cites{Jambu-Papadima1998,Jambu-Papadima2002} proved this in the case $L$ is the intersection lattice of a central hyperplane arrangement. This is the part where they started to genuinely use $\C$-representability of the underlying matroid. In particular, their constructions and proofs are geometric and results from projective geometry, like Desargue's theorem, play an important role. We do not know whether the result holds if the underlying matroid is not $\C$-representable.

\section{Orlik-Solomon algebra vs holonomy Lie algebra}\label{osvshlnm}

\subsection{Orlik-Solomon algebra} In their study of cohomology ring of complement of a complex hyperplane arrangement, Orlik and Solomon \cite{Orlik-Solomon1980} find a pure combinatorial description in terms of the intersection lattice. Here we recall the definition of Orlik-Solomon algebra defined from a geometric lattice (for details, see \cite{Orlik-Terao1992}).

Let $L$ be a geometric lattice with atom set $A=L_1=\{x_1,x_2,\ldots,x_d\}$. Consider the (rational) exterior algebra $E^*=E^*(A)$ generated by degree $1$ generators $A$. We use notation like $xyz$ instead of $x\wedge y\wedge z$ to represent exterior products in $E^*$ to avoid confusions with meet. Then $E^*$ is graded and its $k$-th graded piece $E^k$ is a vector space over $\Q$ with basis $\{x_{i_1}\cdots x_{i_k}\mid 1\leq i_1<\cdots<i_k\leq d\}$. There is a boundary operator $\partial: E^k\to E^{k-1}$ defined as
\[
\partial(x_{i_1}\cdots x_{i_k})=\sum_{j=1}^k(-1)^{j-1}x_{i_1}\cdots \widehat{x_{i_j}}\cdots x_{i_k},
\]
where $\widehat{x_{i_j}}$ means deleting $x_{i_j}$. For an ordered subset $B=\{x_{i_1},\ldots,x_{i_k}\}(i_1<\cdots<i_k)$ of $A$, we denote $\Pi B:=x_{i_1}\cdots x_{i_k}\in E^k$.

\begin{defn}
Let $L$ be a geometric lattice with atom set $A$. Define the \emph{(rational) Orlik-Solomon algebra} of $L$ as
\[
OS^*(L):=E^*(A)/\langle \partial(\Pi B)\mid r(\vee B)<\#B\rangle.
\]
Note that the ideal in the quotient is graded and thus $OS^*(L)$ is graded.
\end{defn}

\begin{rem}\label{basis}
    The first graded piece $OS^1(L)$ is a vector space over $\Q$ with basis $\{[x_1],\ldots,[x_d]\}$, where $[x_i]$ is the class represented by $x_i\in E^1$. The second graded piece $OS^2(L)$ is a vector space over $\Q$ whose basis can be chosen as follows. For each $z\in L_2$, suppose the atoms covered by $z$ is $A_z=\{x_{i_1},x_{i_2},\ldots,x_{i_p}\}(i_1<i_2<\cdots<i_p)$, then pick $\{[x_{i_1}x_{i_2}],\ldots,[x_{i_1}x_{i_p}]\}$. The union for such sets when $z$ runs over $L_2$ forms a basis for $OS^2(L)$.
\end{rem}

The quadratic closure of $OS^*(L)$ is called the \emph{quadratic Orlik-Solomon algebra} $\overline{OS}^*(L)$, which is determined by $L_{\leq 2}$.

\begin{defn}
Let $L$ be a geometric lattice with atom set $A$. Define the \emph{(rational) quadratic Orlik-Solomon algebra} of $L$ as
\[
\overline{OS}^*(L):=E^*(A)/\langle \partial(\Pi B)\mid \#B=3,r(\vee B)=2\rangle.
\]
\end{defn}

Apparently from definitions, there is a canonical surjective homomorphism of graded algebras $\overline{OS}^*(L)\to OS^*(L)$.

\begin{prop}
    If $L$ is a supersolvable lattice, then the canonical homomorphism $\overline{OS}^*(L)\to OS^*(L)$ is an isomorphism.
\end{prop}
\begin{proof}
    The proof of Lemma 4.3 of \cite{Shelton-Yuzvinsky1997} is lattice-theoretic and hence works here.
\end{proof}

Jambu and Papadima proved the following theorem in the representable case. Again we point out that their proof in fact did not use representability and hence works for a general geometric lattice.

\begin{thm}[Theorem 2.3 of \cite{Jambu-Papadima1998}]\label{jpisom}
    $B$ is solvable in $A$ if and only if there is an isomorphism of graded $\overline{OS}^*(L(B))$-modules:
    \[
    \overline{OS}^*(L)\cong \overline{OS}^*(L(B))\otimes H^*(\bigvee_{\#(A\setminus B)}S^1;\Q),
    \]
    where the $\overline{OS}^*(L(B))$-module structure on $\overline{OS}^*(L)$ is induced by the inclusion of generators.
\end{thm}

Remark that the above isomorphism is a quadratic OS algebra analogue of the cohomology ring isomorphism of strictly linearly fibered arrangement studied in \cite{Falk-Randell1985}. 


Recall that the Hilbert-Poincar\'e series of a graded algebra $A^*=\bigoplus_{i\geq 0}A^i$ is $H_{A^*}(t)=\sum_{i\geq 0}\dim A^i$.

\begin{cor}[Proposition 3.2 of \cite{Jambu-Papadima1998}]
    Let $L$ be hypersolvable with a composition series $(\varnothing=A_0\subsetneq )A_1\subsetneq A_2\subsetneq \cdots \subsetneq A_{\ell}=A$. Then $H_{\overline{OS}^*(L)}(t)$ is a polynomial factored as
\[
H_{\overline{OS}^*(L)}(t)=\prod_{i=1}^{\ell}(1+d_it),
\]
where $d_i=\#(A_{i}\setminus A_{i-1})$ for $i=1,\ldots,\ell$. 
\end{cor}

\begin{proof}
    It follows inductively from the isomorphism in Theorem \ref{jpisom}.
\end{proof}

In particular, the degree of the polynomial $H_{\overline{OS}^*(L)}(t)$ for hypersolvable $L$ is $\ell$, which is independent of the choice of composition series. This number is called the length of $L$, denoted by $\ell=\ell(L)$. For a hypersolvable $L$, then there is an easy characterization of supersolvability using $\ell(L)$.

\begin{thm}[Theorem 3.4 of \cite{Jambu-Papadima1998}]
    For a hypersolvable $L$, it is supersolvable if and only if $\ell(L)=r(L)$.
\end{thm}
\begin{proof}[Sketch of proof]
    Suppose $L$ is supersolvable, then Proposition \ref{sscs} shows that $\ell(L)=r(L)$. Conversely, if $L$ is hypersolvable and $\ell(L)=r(L)$, choose a composition series $A_1\subsetneq A_2\subsetneq \cdots \subsetneq A_{\ell}=A$. Let $x_i:=\vee A_i$ for $i=1,\ldots,\ell$ and Lemma \ref{cc} (1) asserts that $r(x_i)=i$. Then an inverse induction on $i$ with the aid of Lemma \ref{lem1.7} and Lemma \ref{modcork1} shows that $\hat{0}=x_0\lessdot x_1\lessdot\cdots\lessdot x_{\ell}=\hat{1}$ is a maximal chain of modular flats. Thus $L$ is supersolvable. See Theorem 3.4 of \cite{Jambu-Papadima1998} for more details.
\end{proof}

\begin{ex}
    From the composition series in Example \ref{falkcs}, we know $\ell(L)=4$ and $r(L)=3$. So the hypersolvable $L$ in Example \ref{falkex} is not supersolvable.
\end{ex}

An algebraic result of hypersolvable $L$ is the Koszulness of $\overline{OS}^*(L)$, whose proof is the same as in Theorem 6.2 of \cite{Jambu-Papadima1998}. The latter is based on Theorem \ref{jpisom} and use induction on $\ell$, with an algebraic argument invoking a Hochschild-Serre type spectral sequence to prove the inductive step.
\begin{thm}[Theorem 6.2 of \cite{Jambu-Papadima1998}]\label{kos}
    For a hypersolvable $L$, the quadratic OS algebra $\overline{OS}^*(L)$ is Koszul.
\end{thm}

\subsection{Holonomy Lie algebra}\label{holo}
We fix some notations for free Lie algebras. For a finite set $S$, denote by $\Lie(S)$ the free Lie algebra on $S$. We say that $\Lie(S)$ has rank $\#S$ and sometimes simply denote by $\Lie(d)$ for a rank $d$ free Lie algebra. For a finite dimensional vector space $V$, let $\Lie(V)$ be the free algebra on a basis of $V$. The free Lie algebra $\Lie(S)$ is graded by depth of brackets $\Lie(S)=\bigoplus_{n}\Lie(S)_n$.

Let $M$ be a connected CW complex with finite $2$-skeleton. The dual of the map of cup product $H^1(M; \Q)\wedge H^1(M; \Q)\stackrel{\mu}{\longrightarrow}H^2(M; \Q)$ is the comultiplication map
$$H_2(M; \Q) \stackrel{\mu^{*}}{\longrightarrow} H_1(M; \Q)\wedge H_1(M; \Q)\hookrightarrow\Lie(H_1(M; \Q)),$$
where $\Lie(H_1(M; \Q))$ is the free Lie algebra on $H_1(M; \Q).$ Following Chen \cite{Chen1973}, we define the \emph{(rational) holonomy Lie algebra} $\Ho(M)$ of $M$ as the quotient
\[
\Ho(M):=\Lie(H_1(M; \Q))/\langle \mathrm{Im}~\mu^*\rangle,
\]
where $\langle \mathrm{Im}~\mu^*\rangle$ is the ideal generated by the image of the comultiplication $\mu^*$. Note that $\Ho(M)$ is determined by the cohomology ring structure $H^*(M;\Q)$ up to dimension $2$.

In \cite{Kohno1983}, Kohno gives a description of the holonomy Lie algebra for $M$ the complement of a hyperplane arrangement $\A$ in terms of the intersection lattice of $\A$ up to rank $2$. Motivated by his result, we introduce the holonomy Lie algebra of a geometric lattice.

\begin{defn}\label{hlnm}
    Let $L$ be a geometric lattice with set of atoms $A=L_1$. We define the {\it holonomy Lie algebra} $\Ho(L)$ of $L$ as
    \[
    \Ho(L):=\Lie(A)/I(L),
    \]
    where $I(L)$ is the ideal of $\Lie(A)$ generated by the set of generators
    \[
    \left\{\left[x,\Sigma z\right]\mid z\in L_2, x\in A, x<z \right\},
    \]
    where $\Sigma z=\sum_{y\in A,y<z}y\in\Lie(A)_1$.
\end{defn}
Note that $I(L)$ is a graded ideal $I(L)=\bigoplus_{n}I(L)_n$
, where $I(L)_n=I(L)\cap\Lie(A)_n$ and $I(L)_0=I(L)_1=0$. Then $\Ho(L)=\Lie(\A)/I(\A)$ has the natural grading. Note that $\Ho(L)$ is determined by flats up to rank $2$.

We may then state Kohno's result.

\begin{thm}[\cite{Kohno1983}]\label{Kohno}
Let $M$ be the complement of a complex hypersurface in $\C^{\ell}$, put $G=\pi_1(M)$, and $G=G_1\supset G_2\supset G_3\supset\cdots$ the lower central series of $G$. Then there is an isomorphism of graded Lie algebras
	\[
	\Ho(M)\cong \left(\bigoplus G_i/G_{i+1}\right)\otimes \Q.
	\]
	If in particular, $M$ is the complement of a hyperplane arrangement $\A$, we have
	\[
	\Ho(M)\cong\Ho(L(\A)).
	\]
\end{thm}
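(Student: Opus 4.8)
The plan is to establish the two isomorphisms separately; the first, $\Ho(M)\cong\mathrm{gr}(\pi_1(M);\Q)$, carries the real content, while the second, $\Ho(M)\cong\Ho(\A)$ for $M=M(\A)$, is a combinatorial repackaging of Brieskorn's presentation of $H^{*}(M(\A);\Q)$. Put $G=\pi_1(M)$. First I would build the natural \emph{holonomy homomorphism}: the Hurewicz isomorphism identifies $H_1(M;\Q)$ with $\mathrm{gr}(G;\Q)_1=(G/G_2)\otimes\Q$, and since $\mathrm{gr}(G;\Q)$ is generated in degree $1$ by its lower central series, this extends to a surjection of graded Lie algebras $\widetilde{\theta}\colon\Lie(H_1(M;\Q))\twoheadrightarrow\mathrm{gr}(G;\Q)$ out of the free Lie algebra. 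The point is that $\widetilde{\theta}$ annihilates the ideal $I(M)=\langle\mathrm{Im}\,\mu^{*}\rangle$, hence descends to a surjection $\theta\colon\Ho(M)\twoheadrightarrow\mathrm{gr}(G;\Q)$: in degree $2$, $\widetilde{\theta}$ is the commutator map $H_1(M;\Q)\wedge H_1(M;\Q)\to(G_2/G_3)\otimes\Q$, and the classical description of $(G_2/G_3)\otimes\Q$ as the cokernel of $\mu^{*}$ shows that its kernel is exactly $\mathrm{Im}\,\mu^{*}$.

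The hard step is injectivity of $\theta$ in degrees $>2$, and for this I would pass to the Malcev completion. By Quillen's theorem, $\mathrm{gr}(G;\Q)$ is the associated graded, for the lower central series filtration, of the Malcev Lie algebra $\mathfrak{m}(G)$; by Chen's theory of iterated integrals --- equivalently Sullivan's $1$-minimal model, or $H^{0}$ of the reduced bar construction on the de Rham dg-algebra of $M$ --- $\mathfrak{m}(G)$ is a homotopy invariant of $M$. If $M$ is $1$-\emph{formal}, i.e.\ its $1$-minimal model is formally determined by the cup product $\mu\colon H^{1}\wedge H^{1}\to H^{2}$, then $\mathfrak{m}(G)$ is precisely the degree-completion of the quadratically presented Lie algebra $\Lie(H_1(M;\Q))/\langle\mathrm{Im}\,\mu^{*}\rangle=\Ho(M)$; taking associated graded recovers $\Ho(M)$, which is already graded, and comparison with the construction above forces $\theta$ to be an isomorphism. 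For the spaces of interest here this is classical: complements of hyperplane arrangements are formal, since by Brieskorn the logarithmic forms $\omega_{H}=\tfrac{1}{2\pi i}\,d\log\alpha_{H}$ generate a sub-dg-algebra of $\Omega^{*}_{\mathrm{dR}}(M)$ with zero differential that maps isomorphically onto $H^{*}(M;\C)$ (the general case being handled the same way whenever $M$ is $1$-formal).

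For $M=M(\A)$, the second isomorphism follows by rewriting the presentation of $\Ho(M)$ using Brieskorn's theorem: the classes $\{\omega_{H}\}_{H\in\A}$ form a basis of $H^{1}(M;\Q)$, whose dual basis $\{e_{H}\}$ identifies $\Lie(H_1(M;\Q))$ with $\Lie(\A)$, and $H^{2}(M;\Q)$ is the quotient of $\bigwedge^{2}H^{1}(M;\Q)$ by the Orlik--Solomon relations, which split as a direct sum, over the rank-$2$ flats $L\in\La_2(\A)$, of the Arnold relations among $\{\omega_{H}\mid H\prec L\}$. Dualizing flat by flat reduces to a short computation inside $\bigwedge^{2}\mathrm{span}\{e_{H}\mid H\prec L\}$ showing that $\mathrm{Im}\,\mu^{*}$ is spanned by the elements $[H,\Sigma L]$ for $L\in\La_2(\A)$ and $H\prec L$; these satisfy the single dependency $\sum_{H\prec L}[H,\Sigma L]=[\Sigma L,\Sigma L]=0$, which is consistent with the count $\dim H^{2}(M;\Q)=\sum_{L\in\La_2(\A)}(\#\A_{L}-1)$. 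Hence $I(M)$ corresponds to $I(\A)$ under $\Lie(H_1(M;\Q))\cong\Lie(\A)$, and $\Ho(M)\cong\Ho(\A)$. The main obstacle throughout is the injectivity of $\theta$: it genuinely relies on the $1$-formality input and the minimal-model/bar-construction dictionary identifying $\mathfrak{m}(G)$ with the completed quadratic holonomy algebra, whereas the construction of $\theta$ and its translation into the lattice-theoretic presentation $\Ho(\A)$ are routine once Brieskorn's computation of $H^{*}(M(\A))$ is available.
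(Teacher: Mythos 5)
This theorem is not proved in the paper at all; it is quoted from Kohno's 1983 paper, so the only meaningful comparison is with the standard argument there. Your outline is essentially that argument: build the surjection $\theta\colon\Ho(M)\to\mathrm{gr}(\pi_1(M);\Q)$ from Hurewicz plus the Sullivan/Chen description of $(G_2/G_3)\otimes\Q$ as the cokernel of $\mu^*$, and get injectivity by identifying the Malcev Lie algebra of $\pi_1(M)$ with the completion of the quadratic algebra $\Lie(H_1(M;\Q))/\langle\mathrm{Im}\,\mu^*\rangle$. The second half, translating $\Ho(M)$ into $\Ho(\A)$ via Brieskorn/Orlik--Solomon and the flat-by-flat dualization showing $\mathrm{Im}\,\mu^*$ is spanned by the elements $[H,\Sigma L]$, $H\prec L$, $L\in\La_2(\A)$, is correct and is exactly how this identification is usually done; your dimension count $\sum_{L\in\La_2(\A)}(\#\A_L-1)$ checks out.

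The genuine gap is in the first half, for the statement as actually given: $M$ is an arbitrary hypersurface complement in $\C^{\ell}$, not just an arrangement complement. Your injectivity argument is conditional on $1$-formality (``the general case being handled the same way whenever $M$ is $1$-formal''), but you never establish $1$-formality for a general hypersurface complement, and this is precisely the nontrivial input of Kohno's theorem. General hypersurface complements are not formal, and there is no Brieskorn-type subalgebra of closed forms computing $H^*$; what saves the day is Deligne--Morgan mixed Hodge theory: $H^1(M;\Q)$ of a hypersurface complement is pure of weight two (type $(1,1)$, spanned by the logarithmic classes of the irreducible components), and Morgan's weight argument then forces the $1$-minimal model, equivalently the Malcev Lie algebra, to be quadratically presented by the cup product --- this is how Kohno proves the quadratic presentation. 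Without supplying that step your proof only covers the $1$-formal (in particular the arrangement) case, which happens to be all this paper uses, but it does not prove the theorem as stated. A secondary, smaller point to make explicit if you write this up: in your degree-two step you should justify that the image of $H_2(M;\Q)$ in $\wedge^2H_1(M;\Q)$ agrees with that of $H_2(G;\Q)$ (Hopf's theorem gives surjectivity of $H_2(M)\to H_2(G)$), so that Sullivan's exact sequence for the group really identifies $\ker\bigl(\wedge^2 H_1\to (G_2/G_3)\otimes\Q\bigr)$ with $\mathrm{Im}\,\mu^*$ of the space.
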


From this theorem, we can compute the invariant $\varphi_i(G)=\mathrm{rank}(G_i/G_{i+1})$ of arrangement $\A$ through calculating the dimension of $\Ho(L(\A))_i$.

\begin{rem}\label{isom}
    Using our choice of bases for $OS^1(L)$ and $OS^2(L)$ (Remark \ref{basis}), it is readily seen that our definition of $\Ho(L)$ is equivalent to
    \[
    \Ho(L)=\Lie(A)/\langle\mathrm{Im}~m^* \rangle,
    \]
    where $m^*$ is the dual of multiplication $m:OS^1(L)\wedge OS^1(L)\to OS^2(L)$ (or in the quadratic Orlik-Solomon algebra $m:\overline{OS}^1(L)\wedge \overline{OS}^1(L)\to\overline{OS}^2(L)$). In the case $L=L(\A)$ is the intersection lattice of an arrangement, this is exactly Kohno's result $\Ho(M)\cong \Ho(L(\A))$ since $H^*(M;\Q)\cong OS^*(L(\A))$ (\cite{Orlik-Solomon1980}). Moreover, if $S$ is a complex with $H^*(S;\Q)\cong OS^*(L)$ (or at least up to degree $2$) for some geometric lattice $L$, then $\Ho(S)\cong\Ho(L)$.
\end{rem}

By regarding $\overline{OS}^*(L)$ and the universal enveloping algebra $U(L)$ of $\Ho(L)$ as quotients of the tensor algebra $T^*(A)$ on atoms modulo respective relations, it is straightforward to make the following observation (cf. Lemma 4.1 of \cite{Shelton-Yuzvinsky1997}).
\begin{prop}\label{dual}
    For a geometric lattice $L$, the Koszul dual of its quadratic Orlik-Solomon algebra is isomorphic to the universal enveloping algebra of its holonomy Lie algebra
    \[
    \overline{OS}^*(L)^!\cong U(L).
    \]
\end{prop}
Combining Theorem \ref{kos} and Proposition \ref{dual}, we conclude the following corollary.
\begin{cor}
    For a hypersolvable $L$,
    \[
    H_{\overline{OS}^*(L)}(-t)=\prod_{i=1}^{\infty}(1-t^i)^{\dim_{\Q}\Ho(L)_i}.
    \]
\end{cor}
\begin{proof}
 Since $\overline{OS}^*(L)$ is Koszul, the Hilbert series satisfies $H_{U(L)}(t)H_{\overline{OS}^*(L)}(-t)=1$ (\cite{Priddy1970}). By Poincar\'e-Birkhoff-Witt's formula (Theorem 9.9 of \cite{Hall2015}), the right-hand-side of the equation to be proved is equal to $1/H_{U(L)}(t)$.
\end{proof}

\section{Holonomy Lie algebras of lattice pairs}\label{hlnmlatpair}
Let us begin our study of the holonomy Lie algebras of solvable lattice pairs.

\subsection{Closed subsets of atoms}

In this subsection, we shall consider a pair of atom sets $B\subsetneq A$, where $B$ is closed in $A$. The results here are generalizations of those in \cite{LimaFilho-Schenck2009}, where the authors consider graphic arrangement pairs which are \emph{triangle complete}. We point out that such arrangement pairs are also closed.
Firstly, we prove a technical lemma.

\begin{lem}\label{tech}
Assume $B\subsetneq A$ is closed.
		If $\alpha=[x_n,[x_{n-1},\ldots[x_1,x_0]\ldots]]\in\Lie(A)$, for some $x_i\in A\setminus B$, then in $\Ho(L)$,
		$$\alpha+I(L)=\alpha^F+I(L)$$
		for some $\alpha^F\in\Lie(A)$ where all entries are from $A\setminus B$.
	\end{lem}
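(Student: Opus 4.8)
The plan is to isolate the single place where the hypothesis that $\B$ is closed in $\A$ enters --- a rank-two computation --- and to deduce everything else by formal manipulations inside the free Lie algebra. Write $\Lie(\A\setminus\B)$ and $\Lie(\B)$ for the free Lie subalgebras of $\Lie(\A)$ generated by $\A\setminus\B$ and by $\B$ respectively; the conclusion to be proved is that $\alpha$ is congruent modulo $I(\A)$ to an element of $\Lie(\A\setminus\B)$.

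The engine of the argument is the following claim: \emph{for every $\nu\in\Lie(\A\setminus\B)$ and every $\rho\in\Lie(\B)$, $[\nu,\rho]\equiv\nu^F\pmod{I(\A)}$ for some $\nu^F\in\Lie(\A\setminus\B)$.} I would prove this by induction on the pair $(\deg\rho,\deg\nu)$ ordered lexicographically, reducing first by bilinearity to the case where $\nu$ and $\rho$ are iterated brackets of hyperplanes. If $\deg\rho\geq 2$, write $\rho=[\rho_1,\rho_2]$ with $\rho_1,\rho_2\in\Lie(\B)$ of strictly smaller degree; the Leibniz identity gives $[\nu,[\rho_1,\rho_2]]=[[\nu,\rho_1],\rho_2]+[\rho_1,[\nu,\rho_2]]$, and by the inductive hypothesis $[\nu,\rho_i]$ is congruent to an element of $\Lie(\A\setminus\B)$, so each resulting term becomes a bracket of an element of $\Lie(\A\setminus\B)$ with a $\Lie(\B)$-monomial of degree strictly less than $\deg\rho$, and a second application of the hypothesis closes this case. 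If $\deg\rho=1$, say $\rho=\beta\in\B$, I would sub-induct on $\deg\nu$: for $\deg\nu\geq 2$ one writes $\nu=[\nu_1,\nu_2]$ and expands $[[\nu_1,\nu_2],\beta]$ by antisymmetry and Leibniz, applying the hypothesis to the lower-degree brackets $[\nu_i,\beta]$ and using that $\Lie(\A\setminus\B)$ is a subalgebra. This leaves only the base case $\nu=c\in\A\setminus\B$.

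That base case is where closedness is used. As $\beta\in\B$ and $c\in\A\setminus\B$ are distinct hyperplanes of a central arrangement, $L:=\beta\cap c$ is a rank-two flat. No hyperplane of $\B$ other than $\beta$ lies below $L$: if $\beta'\in\B\setminus\{\beta\}$ had $\beta'\succeq L$, then $r(\beta'\cap\beta\cap c)\leq r(L)=2$, contradicting the closedness condition $r(\beta'\cap\beta\cap c)=3$. Hence $\Sigma L=\beta+\sum_{c'\in\A_L\setminus\{\beta\}}c'$ with every $c'\in\A\setminus\B$. Since $c\prec L$, the element $[c,\Sigma L]$ is one of the defining generators of $I(\A)$; expanding it and cancelling $[c,c]=0$ yields
\[
[\beta,c]\ \equiv\ \sum_{c'\in\A_L\setminus\{\beta,c\}}[c,c']\pmod{I(\A)},
\]
whose right-hand side lies in $\Lie(\A\setminus\B)$. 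This establishes the base case, hence the claim.

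Finally I would deduce the lemma. Let $j$ be the least index with $x_j\in\A\setminus\B$; then $x_0,\dots,x_{j-1}\in\B$, so $\delta:=[x_j,[x_{j-1},\dots,[x_1,x_0]\dots]]$ equals $[x_j,\rho]$ for some $\rho\in\Lie(\B)$ (or $\delta=x_j$ if $j=0$), and the claim gives $\delta\equiv\delta^F\pmod{I(\A)}$ with $\delta^F\in\Lie(\A\setminus\B)$. Writing $\alpha=[x_n,[x_{n-1},\dots,[x_{j+1},\delta]\dots]]$ and inducting on $n-j$, each remaining step brackets an already-simplified element of $\Lie(\A\setminus\B)$ on the left with some $x_i$: if $x_i\in\A\setminus\B$ the result still lies in $\Lie(\A\setminus\B)$, and if $x_i\in\B$ the claim (with the roles of $\nu$ and $\rho$ swapped, up to sign) again produces an element of $\Lie(\A\setminus\B)$ modulo $I(\A)$, which is the desired $\alpha^F$. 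The step I expect to be the main obstacle is precisely the base case above: reading off the combinatorial fact $\A_{\beta\cap c}\cap\B=\{\beta\}$ from the definition of ``closed'' and using the single relation $[c,\Sigma(\beta\cap c)]\in I(\A)$ to rewrite $[\beta,c]$ entirely within $\Lie(\A\setminus\B)$. Everything after that is Jacobi identities, bilinearity, and closure of $\Lie(\A\setminus\B)$ under the bracket.
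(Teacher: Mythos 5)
Your proof is correct and follows essentially the same route as the paper: the only place closedness enters is the rank-two base case, where $\A_{\beta\cap c}\cap\B=\{\beta\}$ and the generator $[c,\Sigma(\beta\cap c)]$ of $I(\A)$ rewrite $[\beta,c]$ inside $\Lie(\A\setminus\B)$, and the rest is Jacobi-identity bookkeeping, merely repackaged as the claim $[\Lie(\A\setminus\B),\Lie(\B)]\subseteq\Lie(\A\setminus\B)+I(\A)$ with a lexicographic double induction in place of the paper's induction on bracket length. (One trivial notational slip: with the paper's order by reverse inclusion, a hyperplane containing $L$ satisfies $\beta'\preceq L$, not $\beta'\succeq L$.)
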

	\begin{proof}
	This lemma is a generalization of Lemma 2.2 in \cite{LimaFilho-Schenck2009}. We prove by induction on $n$.

First suppose $n=1$, $\alpha=[x_1,x_0]$ and $x_1\in A\setminus B$, $x_0\in B$. If $z:=x_0\vee x_1\in L_2$ covers no other atoms $x^{\prime}\in A$, then
\[
[x_1,x_0]+I(L)=[x_1,x_0+x_1]+I(L)=0+I(L)=[x_1,x_1]+I(L),
\]
hence $\alpha^F=[x_1,x_1]$ is the desired representative. If $z=x_0\vee x_1\in L_2$ covers some other atoms, say all atoms covered by $z$ are $\{x_0, x_1, x_2,\ldots,x_s\}$. Since $B$ is closed, $x_1,x_2,\dots, x_s\in A\setminus B$, then we have
\[
[x_1,x_0]+I(L)=[x_1,x_0+x_1+\cdots+x_s]-[x_1,x_2]-\cdots-[x_1,x_s]+I(L)=\alpha^F+I(L),
\]
where $\alpha^F=[x_2,x_1]+\cdots+[x_s,x_1]$ is the desired representative.

Next suppose $\beta=[x_{n-1},\ldots,[x_1,x_0]\ldots]=[x_{n-1},\beta^{\prime}]$ has at least one entry from $A\setminus B$ and consider $\alpha=[x_n,\beta]$. By inductive hypothesis, it suffices to assume all entries of $\beta$ are from $A\setminus B$. Then the Jacobi identity gives
\[
[x_n,[x_{n-1},\beta^{\prime}]]=-[\beta^{\prime},[x_n,x_{n-1}]]-[x_{n-1},[\beta^{\prime},x_n]].
\]
The inductive hypothesis again proves the result. For the case $x_n\in A\setminus B$ and all entries of $\beta$ are from $B$, the result also follows from the above Jacobi identity and the inductive hypothesis.
	\end{proof}

For a geometric lattice $L=L(A)$ and a subset of atoms $B\subsetneq A$, note that the sublattice $L(B)$ has atom set $B$. Moreover, there is an inclusion $\sigma:\Lie(B)\to\Lie(A)$ between free Lie algebras, via which we may identify $\Lie(B)$ as a Lie subalgebra of $\Lie(A)$. In the other direction, the assignment
\[
\pi(x)=\begin{cases}
	x, & \text{ if } x\in B;\\
	0, & \text{ if } x\in A\setminus B.
\end{cases}
\]
extends to a surjective Lie algebra homomorphism $\pi:\Lie(A)\to\Lie(B)$. Note that $\pi\circ\sigma=id_{\Lie(B)}$. If $B\subsetneq A$ is closed, then each $z\in L_2$ either covers no atoms in $A\setminus B$ (equivalently $z\in L(B)$), or covers at most one atom in $B$. Hence for a generator $[x,\Sigma z]$ of $I(L)$, where $z\in L_2,x\in A,x<z$,
\[
\pi([x,\Sigma z])=\begin{cases}
	[x,\Sigma z], & \text{ if } z\in L(B);\\
	0, & \text{ otherwise }.
\end{cases}
\]

\begin{defn}
We call $[x,\Sigma z]$ above a \emph{vertical} (resp. \emph{horizontal}) generator of $I(L)$ with respect to $B$, if $z\in L(B)$ (resp. otherwise).   
\end{defn}

Clearly, generators of $I(L(B))$ are exactly the projections of vertical generators of $I(L)$, and we obtain $\pi(I(L))=I(L)\cap\Lie(B)=I(L(B))$. Therefore $\pi$ descends to a surjective Lie algebra homomorphism $\pi_*:\Ho(L)\to\Ho(L(B))$ with a section $\sigma_*:\Ho(L(B))\to\Ho(L)$ induced by $\sigma$. Note that $\sigma_*$ is injective since $I(L)\cap\Lie(B)=I(L(B))$. Thus $\Ho(L(B))$ is identified as the subalgebra of $\Ho(L)$ generated by $\{\alpha+I(L)\mid \alpha\in B\}$. In other words,
\[
\Ho(L(B))=\frac{\Lie(B)}{I(L(B))}=\frac{\Lie(B)}{\Lie(B)\cap I(L)}.
\]

To state the main result of this subsection, we introduce the following definition. It is the Lie algebra version of an \emph{almost-direct product of groups}, which was probably first studied by Falk-Randell \cite{Falk-Randell1985} and named by Jambu-Papadima \cite{Jambu-Papadima1998}.

\begin{defn}
	A Lie algebra $\G$ is an \emph{almost-direct product} of $\Qo$ and $\Ho$ if there is a split short exact sequence of Lie algebras
	\[
	0\to\Ho\to \G\overset{}{\underset{\sigma}\rightleftarrows}\Qo\to 0,
	\]
	and $[\Ho,\sigma(\Qo)]\subseteq[\Ho,\Ho]$. 
\end{defn}

\begin{prop}\label{LS}
	Assume $B\subsetneq A$ is closed. The kernel of $\pi_*$ is the subalgebra of $\Ho(L)$ generated by $\{\alpha+I(L)\mid \alpha\in A\setminus B\}$. In other words,
	\[
	\mathrm{Ker}~\pi_*\cong \frac{\Lie(A\setminus B)}{\Lie(A\setminus B)\cap I(L)}.
	\]
	Moreover, $\Ho(L)$ is an almost-directed product of $\Ho(L(B))$ and $\mathrm{Ker}~\pi_*$.
\end{prop}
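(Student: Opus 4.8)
The split short exact sequence with surjection $\pi_*$ and section $\sigma_*$ was already produced in the discussion preceding the statement, so two things remain: to identify $\mathrm{Ker}~\pi_*$ with the claimed subalgebra, and to verify the compatibility $[\mathrm{Ker}~\pi_*,\sigma_*(\Ho(\B))]\subseteq[\mathrm{Ker}~\pi_*,\mathrm{Ker}~\pi_*]$.

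First I would pin down $\mathrm{Ker}~\pi_*$. Let $\mathfrak{k}=\ker(\pi\colon\Lie(\A)\to\Lie(\B))$, which is the ideal of $\Lie(\A)$ generated by $\A\setminus\B$; since $\pi\circ\sigma=\mathrm{id}$ we get a vector space decomposition $\Lie(\A)=\Lie(\B)\oplus\mathfrak{k}$. Unwinding $\pi_*(\alpha+I(\A))=\pi(\alpha)+I(\B)$ and using $I(\B)=\pi(I(\A))$ gives $\mathrm{Ker}~\pi_*=(\mathfrak{k}+I(\A))/I(\A)$. Next, $\mathfrak{k}$ is spanned by those left-normed bracket monomials in the generators $\A$ that have at least one entry in $\A\setminus\B$: each such monomial lies in $\mathfrak{k}$, the monomials all of whose entries lie in $\B$ span $\Lie(\B)$, and the two families together span $\Lie(\A)=\Lie(\B)\oplus\mathfrak{k}$. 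By Lemma \ref{tech}, each such monomial is congruent modulo $I(\A)$ to an element of $\Lie(\A\setminus\B)$, so $\mathfrak{k}+I(\A)=\Lie(\A\setminus\B)+I(\A)$ and hence
\[
\mathrm{Ker}~\pi_*=\frac{\Lie(\A\setminus\B)+I(\A)}{I(\A)}\cong\frac{\Lie(\A\setminus\B)}{\Lie(\A\setminus\B)\cap I(\A)},
\]
which is precisely the subalgebra of $\Ho(\A)$ generated by $\{\alpha+I(\A)\mid\alpha\in\A\setminus\B\}$.

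For the compatibility I would first note that $[\mathrm{Ker}~\pi_*,\mathrm{Ker}~\pi_*]$ is an ideal of $\Ho(\A)$, being the derived subalgebra of an ideal, and then reduce to generators. For fixed $q\in\sigma_*(\Ho(\B))$, the set $\{h\in\mathrm{Ker}~\pi_*\mid[h,q]\in[\mathrm{Ker}~\pi_*,\mathrm{Ker}~\pi_*]\}$ is a subalgebra of $\mathrm{Ker}~\pi_*$: linearity is clear, and closure under bracket follows from the Jacobi identity $[[h_1,h_2],q]=[h_1,[h_2,q]]-[h_2,[h_1,q]]$ together with $[\mathrm{Ker}~\pi_*,[\mathrm{Ker}~\pi_*,\mathrm{Ker}~\pi_*]]\subseteq[\mathrm{Ker}~\pi_*,\mathrm{Ker}~\pi_*]$. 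Symmetrically, for fixed $h\in\mathrm{Ker}~\pi_*$, the set $\{q\in\sigma_*(\Ho(\B))\mid[h,q]\in[\mathrm{Ker}~\pi_*,\mathrm{Ker}~\pi_*]\}$ is a subalgebra of $\sigma_*(\Ho(\B))$, this time using the Jacobi identity $[h,[q_1,q_2]]=[[h,q_1],q_2]-[[h,q_2],q_1]$ and the fact that $[\mathrm{Ker}~\pi_*,\mathrm{Ker}~\pi_*]$ is an ideal of all of $\Ho(\A)$. Since $\mathrm{Ker}~\pi_*$ is generated by $\{\bar x\mid x\in\A\setminus\B\}$ and $\sigma_*(\Ho(\B))$ by $\{\bar y\mid y\in\B\}$, where $\bar z=z+I(\A)$, it therefore suffices to check $[\bar c,\bar b]\in[\mathrm{Ker}~\pi_*,\mathrm{Ker}~\pi_*]$ for all $c\in\A\setminus\B$ and $b\in\B$. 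For such $c\neq b$ the flat $L=c\cap b$ has rank $2$, and since $\B$ is closed every hyperplane under $L$ other than $b$ lies in $\A\setminus\B$; the $n=1$ step of the proof of Lemma \ref{tech} then shows that $[c,b]+I(\A)$ is either $0$ or a sum of brackets of the form $[x,c]+I(\A)$ with $x\in\A\setminus\B$, and in either case it lies in $[\mathrm{Ker}~\pi_*,\mathrm{Ker}~\pi_*]$, as required.

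I expect the main obstacle to be the first part: recognizing that Lemma \ref{tech} is exactly what is needed to collapse the a priori strictly larger ideal $\mathfrak{k}$ onto the image of $\Lie(\A\setminus\B)$ modulo $I(\A)$, and keeping straight that $\mathrm{Ker}~\pi_*$ is being computed inside the quotient $\Ho(\A)$ rather than inside $\Lie(\A)$. The almost-direct product condition is then a routine reduction-to-generators argument, the only point worth care being that $[\mathrm{Ker}~\pi_*,\mathrm{Ker}~\pi_*]$ is an ideal of the whole of $\Ho(\A)$, not merely of $\mathrm{Ker}~\pi_*$.
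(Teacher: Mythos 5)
Your proposal is correct and follows essentially the same route as the paper: both parts rest on Lemma \ref{tech} together with the identity $\pi(I(\A))=I(\A)\cap\Lie(\B)=I(\B)$ established before the statement. Your only deviations are cosmetic — you replace the paper's snake-lemma diagram by a direct second-isomorphism-theorem computation of $\mathrm{Ker}\,\pi_*=(\mathrm{Ker}\,\pi+I(\A))/I(\A)$, and you spell out the reduction to the generators $[\bar c,\bar b]$ (handled by the $n=1$ case of Lemma \ref{tech} and closedness) where the paper simply cites Lemma \ref{tech} for the almost-direct product condition.
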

\begin{proof}
	This proposition is a generalization of Corollary 2.3 in \cite{LimaFilho-Schenck2009}. In the following diagram of snake lemma,
		\begin{center}
\bt
  \&
  \& \mathrm{Ker} ~p_{A}| \ar{r}{} \ar{d}
  \& I(L)\ar{r}{\pi|} \ar{d}
  \& I(L(B)) \ar{d}   
  \&
  \&\\
    \&
    \& \mathrm{Ker}~\pi \ar{r}{} \ar{dd}[near start]{p_{A}|}
    \& \Lie(A) \ar{r}{\pi} \ar{dd}[near start]{p_{A}}
    \& \Lie(B)\ar{r}\ar{dd}[near start]{p_{B}}
    \&  0
    \& ~\\
        \&
        \&
        \& ~
        \&
        \&
          \ar[r, phantom, ""{coordinate, name=Y}]
        \&~\\
        ~\& \ar[l, phantom, ""{coordinate, name=Z}] 0 \ar{r}
        \& \mathrm{Ker}~\pi_* \ar{r}{} \ar{d}
        \& \Ho(L) \ar{r}{\pi_*} \ar{d}
        \& \Ho(L(B)) \ar{d}
        \&
            \& \\
              \&
              \& \ar[from=uuuurr, "\partial", crossing over, rounded corners,
                      to path=
                              { -- ([xshift=2ex]\tikztostart.east)
                              -| (Y) [near end]\tikztonodes
                              -| (Z) [near end]\tikztonodes
                              |- ([xshift=-2ex]\tikztotarget.west)
                               -- (\tikztotarget)}
                    ]\mathrm{Coker}~p_{A}|\ar{r}{}
               \& 0\ar{r}{}
               \& 0
               \&
               \&
    \et
  \end{center}
  since $\pi|_{I(L)}$ is onto, the connecting homomorphism $\partial$ is trivial. This proves that $p_{A}$ restricts to a surjection $\mathrm{Ker}~\pi\to\mathrm{Ker}~\pi_*$. Note that $\mathrm{Ker}~\pi$ is the \emph{ideal} of $\Lie(A)$ generated by elements in $A\setminus B$. Then $\mathrm{Ker}~\pi_*$ is the \emph{ideal} of $\Ho(L)$ generated by $\{\alpha+I(L)\mid \alpha\in A\setminus B\}$. By Lemma \ref{tech}, any element in $\mathrm{Ker}~\pi_*$ represented by a bracket containing entries from $A\setminus B$ can always be represented by sum of brackets containing no entries from $B$. To prove $\mathrm{Ker}~\pi_*$ is the \emph{subalgebra} of $\Ho(L)$ generated by $\{\alpha+I(L)\mid \alpha\in A\setminus B\}$, take an element of $\Ho(L)$ represented by $\beta\in\Lie(A)$ a sum of brackets whose entries are all from $B$, then $\pi_*(\beta+I(L))=0$ if and only if $\pi(\beta)=\beta\in I(L(B))$. Since $I(L(B))=I(L)\cap\Lie(B)$, we conclude that $\mathrm{Ker}~\pi_* $ does not contain nontrivial element represented by sum of brackets whose entries are all from $B$.
  
  The desired isomorphism is induced by the canonical quotient homomorphism $p_{A}:\Lie(A)\to\Ho(L)$.
  
  The inclusion $[\mathrm{Ker}~\pi_*,\Ho(L(B))]\subset[\mathrm{Ker}~\pi_*,\mathrm{Ker}~\pi_*]$ is a consequence of Lemma \ref{tech}.
\end{proof}

\subsection{Solvable subset of atoms}
We have seen that for a closed subset of atoms, the holonomy Lie algebra factors as an almost-direct product. In this subsection, we impose more requirements on a subset of atoms, which guarantees the kernel factor is a free Lie algebra.

We can prove the following key lemma.
\begin{lem}\label{J}
Let $L$ be a geometric lattice with atom set $A$. Suppose $B\subsetneq A$ is a solvable subset, we have
\[
\Lie(A\setminus B)\cap I(L)=0.
\]
\end{lem}
\begin{proof}
This is almost Part 2 of the proof of \cite{Jambu1990} Theorem 4.3.1, noting that the proof there is purely lattice theoretic and the conditions on $B\subsetneq A$ required there is exactly solvability. The only part of Jambu's proof that needs modification is the Step 1 there. We now reproduce the modified proof.

We shall prove the following claim. Suppose $a\in A\setminus B$ and $[b_1,b_2+\cdots+b_p]~(b_i\in B,p\geq 2)$ a vertical generator of $I(L)$, then $[a,[b_1,b_2+\cdots+b_p]]\in I(L)$. In fact,
\[
[a,[b_1,b_2+\cdots+b_p]]=\sum_{i=2}^p(-[[a,b_i],b_1]+[[a,b_1],b_i]).
\]
Consider $a\vee b_i\in L_2$, by closedness of $B$, we may suppose the atoms covered by $a\vee b_i$ are
\[
\{a,b_i,a_{ij}\mid j=1,\ldots,q(i)\},
\]
where $a_{ij}\in A\setminus B$. Then we may write
\[
-[[a,b_i],b_1]=-\left[\left[a,\sum_j a_{ij}+b_i\right],b_1 \right]+\sum_j[[a,a_{ij}],b_1],
\]
where $[a,\sum_j a_{ij}+b_i]$ is a horizontal generator of $I(L)$. Similarly,
\[
[[a,b_1],b_i]=\left[\left[a,\sum_k a_{1k}+b_1\right],b_i\right]-\sum_k[[a,a_{1k}],b_i].
\]
We need to further express the terms $[[a,a_{ij}],b_1]$ and $-[[a,a_{1k}],b_i]$ using Jacobi identity.
\begin{align*}
[[a,a_{ij}],b_1]&=[[a,b_1],a_{ij}]+[a,[a_{ij},b_1]]\\
&=\left[\left[a,\sum_ka_{1k}+b_1\right],a_{ij} \right]-\sum_k[[a,a_{1k}],a_{ij}]+\left[a,\left[a_{ij},\sum_m a_{1m}^{ij}+b_1\right]\right]-\sum_m[a,[a_{ij},a_{1m}^{ij}]],\\
-[[a,a_{1k}],b_i]&=-[[a,b_i],a_{1k}]-[a,[a_{1k},b_i]]\\
&=-\left[\left[a,\sum_ja_{ij}+b_i\right],a_{1k} \right]+\sum_j[[a,a_{ij}],a_{1k}]-\left[a,\left[a_{1k},\sum_l a_{il}^{1k}+b_i\right]\right]+\sum_l[a,[a_{1k},a_{il}^{1k}]].
\end{align*}
Now mod $I(L)$, we have
\begin{align*}
    [a,[b_1,b_2+\cdots+b_p]]&\equiv \sum_i\sum_j\sum_k[[a,a_{ij}],a_{1k}]+\sum_i\sum_j\sum_k[[a_{1k},a],a_{ij}]\\
    &+\sum_i\sum_j\sum_m[[a_{ij},a_{1m}^{ij}],a]+\sum_i\sum_k\sum_l[[a_{il}^{1k},a_{1k}],a].
\end{align*}
Note that the right hand side sum is in $\Lie(A\setminus B)$ and we need to prove it is $0$.

Note that $a,a_{1k},a_{ij}(i\neq 1)\in A\setminus B$ are distinct, by completeness and the third condition of solvability, $f(a,a_{1k})=b_1\neq f(a,a_{ij})=b_i$ implies that $f(a_{1k},a_{ij})<b_1\vee b_i$. Let $z:=a\vee a_{1k}\vee a_{ij}$, then we claim $r(z)=3$, since $a,b_1,b_i<z$ and $B$ is closed. We may thus also write $z=a\vee b_1\vee b_i$. Therefore $b_1,\ldots,b_p<z$ and consequently $a_{1k},a_{ij}<z$ for all $i,j,k$ since they are covered by $a\vee b_1$ or $a\vee b_i$.

Now we consider the atom $a_{1m}^{ij}$ which is covered by $a_{ij}\vee b_1$. Since $a,a_{ij},a_{1m}^{ij}\in A\setminus B$ are distinct by definition, and $f(a,a_{ij})=b_i\neq f(a_{ij},a_{1m}^{ij})=b_1$, we know $f(a,a_{1m}^{ij})$ is one of $b_2,\ldots,b_p$. For if $f(a,a_{1m}^{ij})=b_1$, the flat $x:=a\vee a_{1m}^{ij}\vee b_1\in L_2$ would also cover $a_{ij}$ and $b_i$, which already contradicts closedness. Say $f(a,a_{1m}^{ij})=b_s~(s\neq 1)$, then $a_{1m}^{ij}$ turns out to be $a_{s*}$. With the generator $[a_{ij},\sum_ma_{1m}^{ij}+b_1]$ also appear all other generators $[a_{1m}^{ij},\sum a_{1*}^{**}+b_1]$, which correspond to the same rank $2$ flat. Then $\sum_{i,j,m}[a_{ij},a_{1m}^{ij}]=0$. The third triple sum in question then vanishes.

Next consider $a\vee a_{il}^{1k}\in L_2$. It covers some $b_r(r\neq i)$. Hence $a_{il}^{1k}=a_{r*}$. Finally, consider $a_{1k}\vee a_{ij}\in L_2$. It must cover some $b_t$, then $a_{ij}=a_{t*}^{1k}$. Therefore the fourth triple sum $\sum_{i,k,l}[[a_{il}^{1k},a_{1k}],a]=\sum_{i,j,k}[[a_{ij},a_{1k}],a]$. Now the first, second and fourth triple sums in question cancel out by Jacobi identity. We finish proving the claim.

The remaining Steps 2-4 follows from Jambu's original proof without modifications needed.
 \end{proof}

Combining Proposition \ref{LS} and Lemma \ref{J}, we readily obtain the main result.
 \begin{thm}
     Let $L$ be a geometric lattice with atom set $A$. Suppose $B\subsetneq A$ is a solvable subset, then $\Ho(L)$ is an almost-direct product of $\Ho(L(B))$ and $\Lie(A\setminus B)$.
 \end{thm}

This result is the holonomy Lie algebra version of the analogous fundamental group result for solvable subarrangements of Jambu and Papadima (Theorem 4.3 of \cite{Jambu-Papadima1998}).

\begin{cor}\label{mainresult}
    If $L$ is a hypersolvable lattice with a composition series
\[
(\varnothing=A_0\subsetneq )A_1\subsetneq A_2\subsetneq \cdots \subsetneq A_{\ell}=A,
\]
then $\Ho(L)$ is an iterated almost-direct sum of the free Lie algebras $\Lie(d_i)$ where $d_i=\#(A_{i}\setminus A_{i-1})$ for $i=1,\ldots,\ell$. 
\end{cor}

\begin{ex}
    The holonomy Lie algebra $\Ho(L(F_7))$ of the Fano matroid is an iterated almost-direct sum of free Lie algebras $\Lie(1),\Lie(2)$ and $\Lie(4)$.
\end{ex}






\section{Applications}\label{appl}

\subsection{Oriented matroid}
An oriented matroid can be thought of as a generalization of a real hyperplane arrangement by abstracting the behaviors of sign vectors on the real hyperplane arrangement, which are meaningful thanks to the total order structure of real numbers. Questions about real hyperplane arrangements then can be asked also for oriented matroids.

Let $E$ be a finite set and a function $\sigma:E\to\{+,-,0\}$ is called a sign vector on $E$. The opposite of $\sigma$ is a sign vector $-\sigma$ defined by $(-\sigma)_e=-(\sigma_e)$ for $e\in E$. If $A\subseteq E$, the restriction of $\sigma$ to $A$ is then a sign vector $\sigma|_A$ on $A$. For two sign vectors $\sigma,\tau:E\to\{+,-,0\}$, their \emph{composition} (or \emph{Tits product}) is the sign vector $\sigma\circ\tau$ defined by
\[
(\sigma\circ\tau)_e=\begin{cases}
	\sigma_e, & \text{ if } \sigma_e\neq 0;\\
	\tau_e, & \text{ if } \sigma_e=0.
\end{cases}
\]
Their separating set $S(\sigma,\tau)$ is defined as
\[
S(\sigma,\tau)=\{e\in E\mid \sigma_e=-\tau_e\neq 0\}.
\]
\begin{defn}
    An oriented matroid is a pair $\M=(E,\LL)$, where $E$ is a finite set, called the ground set of $\M$, and $\LL\subseteq \{+,-,0\}^E$ is a set of sign vectors on $E$, called covectors of $\M$, such that
    \begin{itemize}
        \item $\mathbf{0}=(0,0,\ldots,0)\in \LL$,
        \item if $\sigma\in\LL$, then $-\sigma\in \LL$,
        \item if $\sigma,\tau\in\LL$, then $\sigma\circ\tau\in\LL$,
        \item if $\sigma,\tau\in\LL$ and $e\in E$, then there is a covector $\eta\in\LL$ such that $\eta_e=0$ and $\eta_f=(\sigma\circ\tau)_f=(\tau\circ\sigma)_f$ for all $f\in E\setminus S(\sigma,\tau)$.
    \end{itemize}
\end{defn}

Give an oriented matroid $\M=(E,\LL)$, we can define a poset structure on $\LL$ by saying $\sigma\leq\tau$ if $\sigma_e\leq\tau_e$ for all $e\in E$, where we regard $\{+,-,0\}$ as a poset with $0<+$ and $0<-$. The resulting poset $(\LL,\leq)$ is called the \emph{face poset} of $\M$, and the elements of $\LL$ are sometimes called faces. The maximal faces are called \emph{topes}. Write the set of topes by $\T=\T(\M)$. For a face $\sigma$, its \emph{zero set} (or \emph{flat}) $z(\sigma)$ is defined as $\{e\in E\mid \sigma_e=0\}$. A \emph{loop} of $\M$ is an element that is in the intersection of zero set of all faces. Two non-loop elements $e,f\in E$ are \emph{parallel} if $\sigma_e=0$ implies $\sigma_f=0$ for all $\sigma\in\LL$. We will consider only \emph{simple} oriented matroids, that is oriented matroids without loops or parallel elements.

\begin{defn}
    For an oriented matroid $\M=(E,\LL)$, let
    \[
    L(\M):=\{z(\sigma)\mid\sigma\in\LL\}.
    \]
    This is a poset with inclusion order. It is not difficult to see that $L(\M)$ is a geometric lattice.
\end{defn}

For a real hyperplane arrangement, Salvetti \cite{Salvetti1987} defined a regular cell complex which is homotopy equivalent to the complement of the complexified arrangement. This complex is later called the \emph{Salvetti complex} of the real arrangement and becomes the most important tool in studying the topology of real arrangements. The same construction in fact works for a general oriented matroid (\cite{Gelfand-Rybnikov1989}).

\begin{defn}
Let $\M$ be an oriented matroid and $\LL$ its face poset. The set of topes is $\T\subseteq\LL$. Define the \emph{Salvetti poset} $\Sal=\Sal(\M)$ by
\[
\Sal:=\{(\sigma,T)\in \LL\times\T\mid \sigma\leq_{\LL} T\},
\]
with partial order
\[
(\sigma,T)\leq_{\Sal}(\tau,R)\iff \sigma\geq_{\LL}\tau \text{ and }  \sigma\circ R=T.
\]
\end{defn}

We will view the Salvetti poset $\Sal$ as the topological counterpart of $\M$ by abusing notations with the geometric realization of its order complex, as we do in the case $\M$ is defined from a real hyperplane arrangement.

\begin{thm}[\cite{Gelfand-Rybnikov1989}]
    For an oriented matroid $\M$, there is an isomorphism of algebras
    \[
    H^*(\Sal(\M);\Q)\cong OS^*(L(\M)).
    \]
\end{thm}

From the above theorem and Remark \ref{isom}, we immediately obtain the following.
\begin{cor}
    For an oriented matroid $\M$, there is an isomorphism of Lie algebras
    \[
    \Ho(\Sal(\M))\cong \Ho(L(\M)).
    \]
\end{cor}

Then Corollary \ref{mainresult} implies the following.

\begin{cor}
Suppose an oriented matroid $\M=(E,\LL)$ is supersolvable, that is $L=L(\M)$ is supersolvable with a maximal modular chain,
\[
\hat{0}=x_0\lessdot x_1\lessdot\cdots\lessdot x_r=\hat{1}.
\]
Write $A_x$ as the set of atoms in $L_x$ and $d_i:=\#(A_{x_i}\setminus A_{x_{i-1}})$ for $i=1,\ldots,r$. Then the holonomy Lie algebra $\Ho(\Sal(\M))$ is an iterated almost-direct product of free Lie algebras $\Lie(d_1),\ldots,\Lie(d_r)$.
\end{cor}

The fundamental group version of this result is proved by Mücksch \cite{Mucksch2022}. See also Section 8 of \cite{Mucksch2022} for examples of non-representable supersolvable (oriented) matroids, to which our results also apply.

\begin{rem}
    For a matroid, it is not known whether an associated topological model has cohomology ring isomorphic to the Orlik-Solomon algebra. Therefore it is not known whether its holonomy Lie algebra is of topological meaning. There is a conjecture of Engstr\"om and Stamps (Conjecture 5.1.6 of \cite{Stamps2011}) asserting that such a model might be constructed from a codimension two homotopy sphere arrangement.
\end{rem}

\subsection{Hypersolvable arrangements}

Our main result may also apply to the $\C$-representable case, that is $L=L(\A)$ for a hypersolvable hyperplane arrangement $\A$. Together with Kohno's result Theorem \ref{Kohno}, we also obtain the rank $\varphi_i(\A)=\mathrm{rank}(G_i/G_{i+1})$, where $G_i$ is the $i$-th term in the lower central series of $\pi_1(M(\A))$.
\begin{thm}
	Let $\A$ be a hyperplane arrangement such that $L=L(\A)$ is hypersolvable with composition series 
 \[
(\varnothing=A_0\subsetneq )A_1\subsetneq A_2\subsetneq \cdots \subsetneq A_{\ell}=\A,
\]
then $\Ho(M(\A))$ is an iterated almost-direct sum of the free Lie algebras $\Lie(d_i)$ where $d_i=\#(A_{i}\setminus A_{i-1})$ for $i=1,\ldots,\ell$. In particular
\[
\varphi_j(\A)=\dim \Ho(M(\A))_j=\sum_{i=1}^{\ell}\dim \Lie(d_i)_j
\]
for all $j$.
\end{thm}

\begin{rem}
    For $\A$ a fiber-type arrangement, that is $L=L(\A)$ is supersolvable, Theorem 4.5 of \cite{Cohen+2003} gives a inductive description of $\Ho(M(\A))$, where the semidirect product structure is clearly written down.
\end{rem}

Consequently, we also recover the following LCS formula for hypersolvable arrangements.
\begin{cor}[\cites{Jambu-Papadima1998,Jambu-Papadima2002}]
	Let $\A$ be a hypersolvable arrangement as above, then
		\[
	\prod_{j=1}^{\infty}(1-t^j)^{\varphi_j(\A)}=\prod_{i=1}^{\ell}(1-d_it).
	\]

\end{cor}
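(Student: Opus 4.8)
The plan is to obtain the formula as a purely formal consequence of the preceding corollary, in complete analogy with the proof of Corollary~\ref{FR}.

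First I would invoke the preceding corollary, which asserts that for the composition series $\emptyset=\A_0\subset\A_1\subset\cdots\subset\A_\ell=\A$ with $d_i=\#\A_i-\#\A_{i-1}$ one has, for every $j$,
\[
\varphi_j(\A)=\dim\Ho(\A)_j=\sum_{i=1}^{\ell}\dim\Lie(d_i)_j .
\]
All of the genuine content is packaged here: it rests on Theorem~\ref{deformation}, which produces a vertical deformation $\widetilde{\A}(1)$ of fiber-type with $\La_{\leq 2}(\A)=\La_{\leq 2}(\widetilde{\A}(1))$ — whence $\Ho(\A)\cong\Ho(\widetilde{\A}(1))$ because the holonomy Lie algebra depends only on the rank-$\leq 2$ part of the intersection lattice — together with Corollary~\ref{sum} applied to $\widetilde{\A}(1)$, whose exponents are exactly $(d_1,\ldots,d_\ell)$.

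Next I would recall the classical generating-function identity for free Lie algebras (\cite{Magnus+2004}): for a free Lie algebra of rank $d$,
\[
\prod_{j=1}^{\infty}(1-t^j)^{\dim\Lie(d)_j}=1-dt
\]
in $\Z[[t]]$. Substituting the displayed expression for $\varphi_j(\A)$ and using that the exponent of $(1-t^j)$ is additive over the sum, I would compute
\[
\prod_{j=1}^{\infty}(1-t^j)^{\varphi_j(\A)}
=\prod_{i=1}^{\ell}\prod_{j=1}^{\infty}(1-t^j)^{\dim\Lie(d_i)_j}
=\prod_{i=1}^{\ell}(1-d_it),
\]
which is the asserted LCS formula. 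The rearrangement of the double product is legitimate because $\varphi_1(\A)=\#\A<\infty$ and each $\varphi_j(\A)$ is finite, so every factor is a well-defined unit in $\Z[[t]]$.

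As for the main obstacle: there is essentially none at the level of this corollary — the statement is a two-line deduction once the preceding corollary is in hand, and the only nontrivial external input is the classical free-Lie-algebra identity above, which is standard. The real work has already been done in Section~3 (the almost-direct product structure of $\Ho$ for fiber-type arrangements) and in the Jambu–Papadima deformation of Theorem~\ref{deformation} that reduces the hypersolvable case to the fiber-type one.
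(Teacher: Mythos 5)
Your argument is exactly the paper's: the paper derives this formula "similarly" to Corollary \ref{FR}, i.e.\ by combining the preceding corollary's identity $\varphi_j(\A)=\sum_{i=1}^{\ell}\dim\Lie(d_i)_j$ (which rests on Theorem \ref{deformation} and Corollary \ref{sum}) with the classical identity $\prod_{j\geq 1}(1-t^j)^{\dim\Lie(d)_j}=1-dt$ and rearranging the product. Your proposal is correct and takes essentially the same route.
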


\section*{Acknowledgement}
We would like to thank Michele Torielli and Masahiko Yoshinaga for valuable discussion. We are also grateful to Daniel C. Cohen for pointing us to the paper \cite{Cohen+2003}. Y. L. would like to thank Toshiyuki Akita for his invitation to a research stay in Hokkaido University, during which this work was initialed. W. G. is supported by NSFC grant No. 12201029. Y. L. is supported by NSFC grant No. 11901467.

\renewcommand\refname{Reference}
\bibliographystyle{hep}
\bibliography{ref}
\end{document}